\theoremstyle{plain}
\newtheorem{theorem}{Theorem}[section]
\newtheorem{lemma}[theorem]{Lemma}
\newtheorem{corollary}[theorem]{Corollary}
\theoremstyle{definition}
\newcommand{\cald}{{\cal D}}
\newcommand{\calg}{{\cal G}}
\newcommand{\calk}{{\cal K}}
\def\0{\leqno}
\title{On finite groups with dismantlable\\ subgroup lattices}
\author{Marius T\u arn\u auceanu}
\date{February 17, 2015}
\begin{document}
\maketitle

\begin{abstract}
    In this note we study the finite groups whose subgroup
    lattices are dismantlable.
\end{abstract}

{\small \noindent {\bf MSC (2000)\,:} Primary 20D30; Secondary
20D60, 20E15.

\noindent {\bf Key words\,:} finite groups, subgroup lattices,
dismantlable lattices, planar lattices, crowns.}

\section{Introduction}

The relation between the structure of a group and the structure of
its lattice of subgroups constitutes an important domain of
research in group theory. The topic has enjoyed a rapid
development starting with the first half of the 20th century. Many
classes of groups determined by different properties of partially
ordered subsets of their subgroups (especially lattices of
(normal) subgroups) have been identified. We refer to Suzuki's
book \cite{9}, Schmidt's book \cite{7} or the more recent book
\cite{11} by the author for more information about this theory.
\smallskip

A finite lattice $L$ of $n$ elements is called
\textit{dismantlable} if there is a chain $L_1\subset
L_2\subset...\subset L_n=L$ of sublattices of $L$ such that
$|L_i|=i$, for all $i=1,2,...,n$ (see \cite{5,6}). It is
well-known that every lattice with at most seven elements is
dismantlable, while for every integer $n\geq 8$ there is a lattice
of $n$ elements which is not dismantlable. Two basic properties of
these lattices are the following:
\begin{itemize}
\item[-] \textit{The class of dismantlable lattices is closed under the formation of sublattices and homomorphic images}.
\item[-] \textit{If a dismantlable lattice is not a chain, then it contains at least two incomparable doubly irreducible elements}.
\end{itemize}

Several characterizations of dismantlable lattices are known. One
of the most significant is given by \cite{5} and uses some
particular partially ordered sets, namely crowns. Recall that for
an integer $n\geq 3$, a \textit{crown} of order $2n$ is a poset
$\left\{x_1,y_1,x_2,y_2,...,x_n,y_n\right\}$ in which $x_i\leq
y_i$ for all $i=1,2,...,n$, $y_i\geq x_{i+1}$ for all
$i=1,2,...,n-1$, and $x_1\leq y_n$ are the only comparability
relations.

\begin{theorem}
    A finite lattice is dismantlable if and only if it contains no
    crown. In particular, a finite modular lattice is dismantlable
    if and only if it contains no crown of order\, {\rm 6}, or,
    equivalently, no sublattice isomorphic to the boolean lattice ${\bf 2}^3$.
\end{theorem}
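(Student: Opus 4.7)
The plan is to prove the two assertions in sequence.

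\textbf{Part 1 (dismantlable $\iff$ no crown).} The forward direction I would handle by induction on $|L|$. Using the second bullet in the introduction, a dismantlable non-chain $L$ has a doubly irreducible non-extremal element $a$, and $L'=L\setminus\{a\}$ is again a dismantlable sublattice. Assuming $L$ contained a crown $C$: if $a\notin C$ then $C$ lies inside $L'$ and the inductive hypothesis gives a contradiction; if $a\in C$, I would replace $a$ by its unique upper or lower cover in $L$ and show that either the replacement is still a crown of the same order in $L'$, or a crown of smaller order can be extracted, in either case reducing to the previous case. The reverse direction (no crown $\Rightarrow$ dismantlable) is the contrapositive and is harder: assuming $L$ has no doubly irreducible non-extremal element, I would construct a crown by a zig-zag argument in the Hasse diagram, starting from any interior element and alternately following distinct upper and lower covers; the finiteness of $L$ forces a closed walk whose shortest instance forms a crown. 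This is essentially the content of Kelly and Rival's original argument in \cite{5}.

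\textbf{Part 2 (modular refinement).} By Part 1, for a modular $L$ it suffices to establish the equivalence of (i) no crown in $L$, (ii) no crown of order $6$ in $L$, and (iii) no sublattice of $L$ isomorphic to ${\bf 2}^3$. Only two implications are nontrivial. For (ii)$\Rightarrow$(i), I would take a crown of minimal order $2n$ in $L$; assuming $n\geq 4$, I would use modularity to show that an element such as $y_1\wedge y_3$ lies above a single $x_i$ while remaining incomparable with the other crown elements, yielding a crown of order $2(n-1)$ and contradicting minimality. For (ii)$\Rightarrow$(iii), given a crown $\{x_1,y_1,x_2,y_2,x_3,y_3\}$ of order $6$, a direct computation with the modular law exhibits a sublattice isomorphic to ${\bf 2}^3$ whose atoms and coatoms arise from suitable joins and meets of the $x_i$'s and $y_j$'s. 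The converse implications (i)$\Rightarrow$(ii) and (iii)$\Rightarrow$(ii) are immediate, the latter because the middle two levels of ${\bf 2}^3$ already form a crown of order $6$.

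\textbf{Main obstacle.} I expect the main difficulty to be the reverse direction of Part 1: extracting a crown from the failure of dismantlability requires a careful combinatorial analysis of the Hasse diagram. Once that is in place, the modular refinement reduces to routine manipulations with the modular law.
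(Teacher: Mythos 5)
First, a point of comparison: the paper does not prove this statement at all — Theorem 1.1 is quoted as background from Kelly and Rival \cite{5}, so there is no internal proof to measure your outline against. Judged on its own, your architecture is the right one, and the forward direction of Part 1 actually works out more cleanly than you anticipate: if $a=x_i$ is doubly irreducible with unique upper cover $a^*$, then $a^*\leq y_{i-1}$ and $a^*\leq y_i$, and every ``bad'' comparability of $a^*$ with another crown element (e.g.\ $a^*\geq x_j$, $a^*=y_{i}$, $a^*\leq y_j$) forces a comparability between two original crown elements that the crown forbids. So the replacement is always again a crown of the same order and the ``extract a smaller crown'' fallback — which would in any case be unavailable for a crown of order $6$, the minimum possible — is never needed. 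You should also restructure the converse as the induction actually requires it: the lemma to prove is that a crown-free finite lattice which is not a chain possesses a doubly irreducible element (whose deletion leaves a crown-free sublattice), not the literal contrapositive of the whole equivalence.

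The genuine gap is precisely that lemma, and your sketch of it does not work as stated: a closed alternating walk in the Hasse diagram obtained by following distinct upper and lower covers is a ``closed fence,'' not a crown — a crown requires the \emph{global absence} of all order relations other than the prescribed ones, and consecutive elements of a shortest closed walk can still be comparable through paths outside the walk. Converting a closed fence into a crown is the real content of \cite{5} and needs a careful minimality/normalization argument that you have not supplied. Similarly, in Part 2 the two nontrivial steps — that a minimal crown in a modular lattice has order $6$, and that a crown of order $6$ in a modular lattice generates a sublattice isomorphic to ${\bf 2}^3$ — are asserted rather than proved; your candidate element $y_1\wedge y_3$ has no forced relation to any $x_i$, so the proposed reduction from order $2n$ to $2(n-1)$ does not get off the ground as written. (You have also swapped the labels of the implications (ii)$\Rightarrow$(iii) and (iii)$\Rightarrow$(ii): the immediate one is that ${\bf 2}^3$ contains a crown of order $6$, while the one needing modularity is that a crown of order $6$ yields a ${\bf 2}^3$.)
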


The connections between dismantlable lattices and planar lattices
(i.e. lattices having a planar diagram) are powerful (see
\cite{1,5}). In this way, every finite planar lattice is
dismantlable. The converse is not necessary true, more precisely
for every integer $n\geq 9$ there is an $n$-element dismantlable
lattice which is not planar. However, for distributive lattices
the above two concepts are equivalent.

\begin{theorem}
    A finite distributive lattice is dismantlable if and only if it is planar.
\end{theorem}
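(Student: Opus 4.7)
The implication ``planar $\Rightarrow$ dismantlable'' is noted in the text preceding the statement, so the content of the theorem is the converse: a finite dismantlable distributive lattice $L$ must be planar. My plan is to invoke Birkhoff's representation theorem, write $L\cong J(P)$ for the poset $P$ of join-irreducible elements of $L$, and then proceed in two stages: (i) by the modular half of Theorem 1, $L$ being dismantlable is equivalent to $L$ containing no sublattice $\mathbf{2}^3$, and this will be translated to the condition that $P$ has no $3$-element antichain; (ii) under that condition, $J(P)$ is planar.

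For stage (i) the correspondence runs as follows. A $3$-antichain $\{a,b,c\}\subseteq P$ generates a $\mathbf{2}^3$-sublattice of $J(P)$ via the eight down-sets $I_S=I_0\cup S$, $S\subseteq\{a,b,c\}$, where $I_0=(\downarrow\!a\cup\downarrow\!b\cup\downarrow\!c)\setminus\{a,b,c\}$; the pairwise incomparability of $a,b,c$ ensures that each $I_S$ is indeed an ideal of $P$, that the $I_S$ are pairwise distinct, and that $I_S\cup I_T=I_{S\cup T}$, $I_S\cap I_T=I_{S\cap T}$. Conversely, from any $\mathbf{2}^3$-sublattice with atoms $A,B,C$ and bottom $M$, I would choose $a\in A\setminus M$, $b\in B\setminus M$, $c\in C\setminus M$ and observe that they form a $3$-antichain: if, say, $a\le b$, then $a\in B$ (as $B$ is a down-set), whence $a\in A\cap B=M$, contradicting $a\notin M$.

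For stage (ii), Dilworth's theorem lets me split $P=C_1\sqcup C_2$ into two chains, and the assignment $I\mapsto(I\cap C_1,\,I\cap C_2)$ is an injective lattice homomorphism of $J(P)$ into the rectangular grid $J(C_1)\times J(C_2)$ of dimensions $(|C_1|+1)\times(|C_2|+1)$. Drawing each ideal $I$ at the coordinate $(|I\cap C_1|,\,|I\cap C_2|)$ produces a planar Hasse diagram of $J(P)$: every covering relation adjoins a single element of $P$ lying in exactly one of $C_1,C_2$, so it becomes either a horizontal or a vertical unit step in the grid, and two such axis-aligned unit edges between distinct integer grid points cannot cross.

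The main obstacle is the translation in stage (i) between $3$-antichains of $P$ and $\mathbf{2}^3$-sublattices of $J(P)$; once this correspondence is nailed down, the planar drawing in stage (ii) is immediate from the grid picture.
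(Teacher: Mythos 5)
The paper does not actually prove this statement: it is quoted as Theorem~1.2, a known result attributed to the literature on dismantlable and planar lattices (Kelly--Rival \cite{5} and Baker--Fishburn--Roberts \cite{1}), so there is no in-paper argument to compare against. Judged on its own, your proof is correct and self-contained modulo Theorem~1.1, which the paper also quotes and which you are entitled to use since distributive lattices are modular. Stage (i) checks out: each $I_S=I_0\cup S$ is a down-set precisely because $\{a,b,c\}$ is an antichain, the eight sets are distinct and closed under union and intersection, so they form a $\mathbf{2}^3$-sublattice; and conversely your choice of $a\in A\setminus M$, $b\in B\setminus M$, $c\in C\setminus M$ with the down-set argument correctly produces a $3$-antichain. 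Stage (ii) is also sound: Dilworth gives the two-chain decomposition, every covering relation in the ideal lattice of a finite poset adjoins exactly one element, and axis-aligned unit edges between integer points have disjoint interiors. The one point worth tightening is that a ``planar diagram'' of a lattice is conventionally required to be drawn upward (if $I<J$ then $I$ is placed strictly lower than $J$), and your literal grid placement makes half the covering edges horizontal; this is repaired by composing with the shear $(u,v)\mapsto(u-v,u+v)$, a linear bijection of the plane that preserves non-crossing and sends every covering edge to one that rises by one unit. With that cosmetic fix, your argument is a complete proof of the quoted theorem, and it has the merit of making explicit the width-$\le 2$ characterization that underlies both halves of the equivalence.
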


Starting with the lattice-theoretical concept of "planar lattice",
in \cite{8} the classification of finite groups whose subgroup
lattices are planar has been made, namely:

\begin{theorem}
    A finite group $G$ has planar subgroup lattice if and only if
    it satisfies one of the following properties {\rm(}where $p$ and $q$ are primes,
    $n\in\mathbb{N}_0$ and $m\in\mathbb{N}${\rm):}
\begin{itemize}
\item[{\rm a)}] $G$ is cyclic of order $p^n$ or $p^nq^m$.
\item[{\rm b)}] $G$ is a $p$-group lattice-isomorphic to $\mathbb{Z}_{p^m}\times\mathbb{Z}_p$\,.
\item[{\rm c)}] $G$ is dihedral of order\, {\rm 8} or quaternion of order\, {\rm 8} or\, {\rm 16}.
\item[{\rm d)}] $G=PQ$, where $P\unlhd G$, $|P|=p$, $Q$ is cyclic of order $q^m$, and \newline $|Q:C_Q(P)|=q$.
\item[{\rm e)}] $G=PQ$, where $P\unlhd G$, $|P|=p^2$, $|Q|=q$, and $Q$ operates irreducibly on $P$.
\end{itemize}
\end{theorem}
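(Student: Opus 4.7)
The plan is to exploit the fact that planarity of a lattice is inherited by sublattices, reducing the problem to a Sylow-by-Sylow analysis: Theorems 1.1 and 1.2 handle the pieces where the subgroup lattice is distributive, while the genuinely non-distributive pieces must be analysed by direct inspection of Hasse diagrams. The cyclic case falls out at once. If $G$ is cyclic of order $p_1^{a_1}\cdots p_k^{a_k}$, then $L(G)$ is distributive and isomorphic to the product of chains $\prod_{i=1}^{k} L(\mathbb{Z}_{p_i^{a_i}})$; by Theorem 1.2 planarity is equivalent to dismantlability, and by Theorem 1.1 to the absence of a $\mathbf{2}^3$-sublattice, which holds precisely when $k\leq 2$. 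This is alternative~(a).

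For non-cyclic $p$-groups I would first show that the $p$-rank of $G$ is exactly $2$: if $\mathbb{Z}_p^3\leq G$, then a $K_{3,3}$ subdivision can be exhibited inside $L(\mathbb{Z}_p^3)$ using three one-dimensional subspaces together with three two-dimensional subspaces arranged in a Fano-type configuration, so $L(G)$ is non-planar. Among the rank-$2$ abelian groups $\mathbb{Z}_{p^a}\times\mathbb{Z}_{p^b}$, the case $b\geq 2$ is excluded by locating a Kuratowski minor in $L(\mathbb{Z}_{p^2}\times\mathbb{Z}_{p^2})$. The residual abelian case $\mathbb{Z}_{p^m}\times\mathbb{Z}_p$ is planar by an explicit planar drawing, and a standard classification of $p$-groups lattice-isomorphic to it (the modular $p$-groups $M_{p^n}$, together with the small exceptional $2$-groups $D_8$, $Q_8$ and $Q_{16}$) yields alternatives (b) and (c).

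Finally, suppose $|G|$ is divisible by more than one prime and $G$ is non-cyclic. A cyclic subgroup of squarefree order $pqr$ already forces a $\mathbf{2}^3$ in $L(G)$, so at most two primes $p,q$ divide $|G|$. The Sylow constraints inherited from the previous step keep both Sylow subgroups small; combining Schur--Zassenhaus with the planarity of the joins $HP$ for $H\leq Q$ then yields a normal Sylow $p$-subgroup $P\unlhd G$ and a cyclic complement $Q$. Splitting according to $|P|$ and examining the induced $Q$-action on $P$ produces (d) when $|P|=p$ with $|Q:C_Q(P)|=q$, and (e) when $|P|=p^2$ with $Q$ acting irreducibly.

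The hardest part will be the boundary analysis in the $p$-group step: proving non-planarity for $L(\mathbb{Z}_p^3)$ and $L(\mathbb{Z}_{p^2}\times\mathbb{Z}_{p^2})$ requires exhibiting explicit Kuratowski subgraphs in the Hasse diagram, since these lattices may still be dismantlable and Theorem 1.1 alone cannot detect non-planarity. A parallel difficulty in the mixed-prime step is verifying that the irreducibility hypothesis of (e) is actually forced by planarity and not merely convenient; this requires bookkeeping of the $Q$-invariant subgroups of $P$ and showing that any reducible action manufactures a forbidden Kuratowski subdivision inside $L(G)$.
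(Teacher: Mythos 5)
First, a point of reference: the paper does not prove this theorem at all. It is Schmidt's classification of planar subgroup lattices, quoted from \cite{8} and used throughout as a black box (e.g.\ in Lemma 2.1 and Theorem 2.3). So there is no internal proof to compare your proposal against; what follows is an assessment of the proposal on its own terms.

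Your outline has a reasonable general shape (cyclic case via distributivity, then $p$-groups, then the mixed-order case), and the cyclic case is handled correctly: for cyclic $G$ the lattice is a product of chains, and Theorems 1.1 and 1.2 reduce planarity to the absence of a $\mathbf{2}^3$-sublattice, i.e.\ to at most two prime divisors. But there are genuine gaps beyond deferred bookkeeping. (i) You derive cases (b) and (c) together from ``the classification of $p$-groups lattice-isomorphic to $\mathbb{Z}_{p^m}\times\mathbb{Z}_p$,'' but $D_8$, $Q_8$ and $Q_{16}$ are \emph{not} lattice-isomorphic to any such group: $Q_8$ has six subgroups while $\mathbb{Z}_4\times\mathbb{Z}_2$ has eight, and $Q_{16}$ has a unique minimal subgroup without being cyclic. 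Case (c) consists precisely of the planar $2$-groups that escape the lattice-isomorphism argument, so your route as written either loses them or never explains why no other non-abelian $p$-group survives; this is where the real work in the $p$-group step lies. (Incidentally, for $\mathbb{Z}_p^3$ you do not need a Kuratowski subgraph: its subgroup lattice contains $\mathbf{2}^3$, hence is not even dismantlable, hence not planar; the hard boundary case is only $\mathbb{Z}_{p^2}\times\mathbb{Z}_{p^2}$, which \emph{is} dismantlable by the paper's Theorem 2.3.) (ii) In the mixed case, ``a cyclic subgroup of squarefree order $pqr$ forces a $\mathbf{2}^3$, so at most two primes divide $|G|$'' is a non sequitur: a group whose order is divisible by three primes need not contain an element of order divisible by three primes (e.g.\ $A_5$), so excluding a third prime divisor requires a separate argument. (iii) The claim that Schur--Zassenhaus combined with planarity of the joins $HP$ yields a normal Sylow $p$-subgroup is circular: Schur--Zassenhaus produces a complement only once a normal Hall subgroup is known to exist, and the existence of that normal Sylow subgroup is exactly what must be proved. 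Together with the explicitly postponed Kuratowski verifications and the unexamined reducible actions in (e), these make the proposal a plan rather than a proof.
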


Inspired by the above classification and because dismantlable
lattices and planar lattices are closely connected, the following
question is very natural:
\begin{itemize}
\item[] \textit{Which are the finite groups $G$ with dismantlable subgroup lattices}?
\end{itemize}This suggests us to consider the class $\cald$ of groups satisfying the above pro\-per\-ty.
Obviously, finite groups with at most seven subgroups are
contained in $\cald$, but at first sight it is difficult to
determine all finite groups in $\cald$. Therefore they must be
investigated more carefully. Their study is the main goal of the
current note.
\bigskip

Most of our notation is standard and will usually not be repeated
here. Basic definitions and results on lattices and groups can be
found in \cite{2,3} and \cite{4,10}, respectively.

\section{Main results}

We start with the following easy but important lemma.

\begin{lemma}
    Let $G$ be a finite group in $\cald$. Then all elements of $G$
    are of order $p^n$ or $p^n q^m$, where $p,q$ are primes and
    $n,m\in \mathbb{N}$.
\end{lemma}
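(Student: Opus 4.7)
The plan is a short proof by contradiction using Theorem 1.1 together with the fact (stated in the introduction) that the class of dismantlable lattices is closed under sublattices. Suppose for contradiction that $G \in \cald$ contains an element $g$ whose order is divisible by three distinct primes $p, q, r$. I would then focus attention on the cyclic subgroup $H = \langle g \rangle$ and its subgroup lattice $L(H)$.

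First I would recall that for a cyclic group, the subgroup lattice is isomorphic to the divisor lattice of its order, which is distributive (hence modular). Since $|H|$ is divisible by $pqr$, there exist subgroups $A, B, C \le H$ of orders $p$, $q$, $r$ respectively; because $|A|, |B|, |C|$ are pairwise coprime primes, the sublattice of $L(H)$ generated by $A, B, C$ is isomorphic to the boolean lattice ${\bf 2}^3$ (with joins $AB$, $AC$, $BC$, $ABC$ of the expected orders, and meets equal to the trivial subgroup).

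Now I would invoke the two closure properties and Theorem 1.1. Since $G \in \cald$, the lattice $L(G)$ is dismantlable; since the class of dismantlable lattices is closed under sublattices, $L(H)$ is also dismantlable. But $L(H)$ is modular, and by the second part of Theorem 1.1 a finite modular dismantlable lattice contains no sublattice isomorphic to ${\bf 2}^3$. This contradicts the existence of the sublattice generated by $A, B, C$. Hence no element of $G$ can have order divisible by three distinct primes, proving that every element has order of the form $p^n$ or $p^n q^m$.

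I do not expect any real obstacle here: the only point that requires a brief justification is that three subgroups of pairwise coprime prime orders inside a cyclic group really do generate a copy of ${\bf 2}^3$, and this is immediate from the distributivity of $L(H)$ together with Lagrange's theorem. Everything else is a direct application of the results already quoted in the introduction.
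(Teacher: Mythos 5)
Your proof is correct, but it takes a genuinely different route from the paper's. The paper argues via planarity: $L(\langle a\rangle)$ is a sublattice of $L(G)$, hence dismantlable; it is distributive, so by Theorem 1.2 it is planar; and Schmidt's classification of groups with planar subgroup lattice (Theorem 1.3) then forces $o(a)\in\{p^n,p^nq^m\}$. You instead bypass planarity and Theorem 1.3 entirely: you exhibit, inside any cyclic group whose order has three distinct prime divisors $p,q,r$, a sublattice isomorphic to ${\bf 2}^3$ (generated by the three subgroups of orders $p$, $q$, $r$), and then invoke the modular case of Theorem 1.1 to conclude that such a lattice cannot be dismantlable. Your identification of the ${\bf 2}^3$ is sound: in a cyclic group the joins $AB$, $AC$, $BC$, $ABC$ have orders $pq$, $pr$, $qr$, $pqr$, and all the relevant meets are trivial by Lagrange, so the eight subgroups form the divisor lattice of $pqr$. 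What each approach buys: yours is more self-contained and more elementary, resting only on the crown/boolean characterization of dismantlable modular lattices plus basic facts about cyclic groups, and it makes the obstruction completely explicit; the paper's version is quicker to state given that Theorems 1.2 and 1.3 are already quoted, and it foregrounds the connection between dismantlability and planarity that motivates the whole note.
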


\begin{proof}
Let $a\in G$. Then $L(\langle a\rangle)$ is a sublattice of $L(G)$
and consequently it is dismantlable. Since $L(\langle a\rangle)$
is also distributive (see e.g. Corollary 1.2.4 of \cite{7}), we
infer that it is planar by Theorem 1.2. Moreover, Theorem 1.3
shows that $o(a)=|\langle a\rangle|$ is of type $p^n$ or $p^n
q^m$, as desired.
\end{proof}

It is clear that the groups $A_n$ for $n\leq 4$ and $S_n$ for
$n\leq 3$ belong to $\cald$. On the other hand, we can easily
check that the subgroups $H_1=\langle (12)(34)\rangle$,
$K_1=\langle (125),(12)(34)\rangle$, $H_2=\langle
(25)(34)\rangle$, $K_2=\langle (143),(25)(34)\rangle$,
$H_3=\langle (13)(25)\rangle$ and $K_3=\langle
(15234),(13)(25)\rangle$ form a crown of order 6 in $A_5$, while
the subgroups $H_1=\langle (12)\rangle$, $K_1=\langle
(123),(12)\rangle$, $H_2=\langle (13)\rangle$, $K_2=\langle
(134),(13)\rangle$, $H_3=\langle (14)\rangle$ and $K_3=\langle
(124),(14)\rangle$ form a crown of order 6 in $S_4$. Consequently,
$A_5$ and $S_4$ are not contained in $\cald$. These remarks lead
to the following result.

\begin{theorem}
     The alternating group $A_n$ is contained in $\cald$ if and
     only if $n\leq 4$, while the symmetric group $S_n$ is contained
     in $\cald$ if and only if $n\leq 3$.
\end{theorem}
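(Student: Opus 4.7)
The plan is to reduce the theorem to two ingredients already essentially assembled in the preceding discussion: the closure of dismantlability under sublattices (hence of $\cald$ under subgroups), and the explicit crowns exhibited in $L(A_5)$ and $L(S_4)$.

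First I would dispatch the sufficiency direction by checking that, for $n\leq 4$ (respectively $n\leq 3$), the group $A_n$ (respectively $S_n$) falls into the classification of Theorem~1.3 and therefore has planar — hence, by the remarks between Theorems~1.2 and~1.3, dismantlable — subgroup lattice. Concretely: $A_1,A_2$ are trivial, $A_3\cong\mathbb{Z}_3$ and $S_2\cong\mathbb{Z}_2$ fall under case (a); $S_3$ has its $\mathbb{Z}_3$ normal with $\mathbb{Z}_2$ acting faithfully, matching case (d); and $A_4=V_4\rtimes\mathbb{Z}_3$ has $\mathbb{Z}_3$ acting irreducibly on the Klein four group $V_4$, so it matches case (e). Thus each of these groups lies in $\cald$.

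For the necessity direction I would use the observation that if $H\leq G$ then $L(H)$ is a sublattice of $L(G)$, so the class $\cald$ is closed under subgroups (since dismantlable lattices are closed under sublattices, as recalled in the introduction). For $n\geq 5$, the group $A_n$ contains a copy of $A_5$, and the six subgroups $H_1,K_1,H_2,K_2,H_3,K_3$ displayed just before the theorem form a crown of order $6$ in $L(A_5)$, hence in $L(A_n)$; by Theorem~1.1 this forces $A_n\notin\cald$. Similarly, for $n\geq 4$, $S_n$ contains a copy of $S_4$, and the displayed crown in $L(S_4)$ embeds into $L(S_n)$, so $S_n\notin\cald$.

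I do not expect a genuine obstacle: the explicit crowns are supplied, and Theorems~1.1 and~1.3 bundle the nontrivial lattice-theoretic input. The only points that need a sentence of justification are the sublattice inheritance $L(H)\hookrightarrow L(G)$ used to transfer the crown from $A_5$ to $A_n$ and from $S_4$ to $S_n$, and the verification that $A_4$ and $S_3$ really fit the listed cases of Theorem~1.3; both are routine.
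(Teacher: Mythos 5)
Your proposal is correct and follows essentially the same route as the paper, whose proof consists of the remarks immediately preceding the theorem: the groups $A_n$ ($n\leq 4$) and $S_n$ ($n\leq 3$) are in $\cald$ by inspection, and the displayed crowns of order $6$ in $L(A_5)$ and $L(S_4)$ together with closure of $\cald$ under subgroups rule out the larger cases. Your extra care in matching $S_3$ and $A_4$ to cases (d) and (e) of Theorem~1.3 merely fleshes out what the paper dismisses as ``clear.''
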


Next we will focus on describing abelian groups $G$ contained in
$\cald$. Since the subgroup lattice of an abelian group is
modular, Theorem 1.1 implies that $G$ belongs to $\cald$ if and
only if $L(G)$ does not contain sublattices isomorphic to the
boolean lattice ${\bf 2}^3$. We remark that for abelian $p$-groups
this condition is equivalent with the fact that $G$ has no section
of type $(p,p,p)$, i.e. it is of rank $\leq 2$.

\begin{theorem}
    A finite abelian group $G$ belongs to $\cald$ if and only if
    either it is cyclic and $|G|\in\{p^n,p^n q^m\}$, where $p,q$
    are primes and $n,m\in \mathbb{N}$, or $G$ is a rank\, {\rm 2}
    abelian $p$-group.
\end{theorem}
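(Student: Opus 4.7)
The plan is to exploit modularity of $L(G)$ for abelian $G$ together with the remark preceding the theorem, which translates membership in $\cald$ into the condition that $L(G)$ has no sublattice isomorphic to $\mathbf{2}^3$, and (in the $p$-group case) that $G$ has no section of type $(p,p,p)$. I then combine this with Lemma 2.1 to restrict the set of primes dividing $|G|$.

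For the forward direction, I start with $G \in \cald$ abelian. By Lemma 2.1, every element order has at most two distinct prime divisors; since an abelian group has an element of order equal to its exponent and the exponent shares its prime divisors with $|G|$, we conclude that $|G|$ is divisible by at most two primes. If $|G|=p^n$, the remark preceding the theorem plus Theorem 1.1 give that $G \in \cald$ is equivalent to rank $\leq 2$, yielding the cyclic case or the rank-$2$ $p$-group case. If $|G|=p^nq^m$ with $p\neq q$, write $G=P\times Q$ and argue that both Sylow factors must be cyclic: otherwise (say $P$ has rank $\geq 2$) $G$ would contain a subgroup isomorphic to $\Z_p\times\Z_p\times\Z_q$, which I claim provides a sublattice isomorphic to $\mathbf{2}^3$ in $L(G)$, contradicting Theorem 1.1. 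Taking two distinct subgroups $A,B$ of order $p$ in the $\Z_p^2$ factor and the subgroup $C$ of order $q$, the eight subgroups $\langle\emptyset\rangle,A,B,C,AB,AC,BC,ABC$ form exactly such a copy of $\mathbf{2}^3$ (they have distinct orders $1,p,p,q,p^2,pq,pq,p^2q$ and coincide with the sublattice generated by three independent atoms). Thus $P$ and $Q$ are cyclic and $G$ is cyclic of order $p^nq^m$.

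For the converse, I treat the three cases separately. If $G$ is cyclic of prime power order, $L(G)$ is a chain and trivially dismantlable. If $G$ is cyclic of order $p^nq^m$, then $L(G)\cong L(\Z_{p^n})\times L(\Z_{q^m})$ is a product of two chains, hence a grid; this is distributive and planar, so dismantlable by Theorem 1.2. If $G$ is a rank-$2$ abelian $p$-group, then $L(G)$ is modular and every section of $G$ has rank at most $2$, so in particular no section of type $(p,p,p)$ exists; by the remark preceding the theorem this means $L(G)$ has no sublattice isomorphic to $\mathbf{2}^3$, so Theorem 1.1 gives $G \in \cald$.

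The main obstacle I anticipate is the verification that a $(p,p,q)$-section forces a $\mathbf{2}^3$ sublattice; everything else is either a direct appeal to Theorems 1.1--1.3 or the stated remark. This step is not deep but requires writing down the eight subgroups and checking that the join and meet operations in $L(G)$ restricted to them reproduce the boolean operations, which follows from coprimality of $p$ and $q$ together with independence of the three chosen cyclic subgroups of prime order.
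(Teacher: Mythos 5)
Your proof is correct, and its overall skeleton (Lemma 2.1 to cut the number of prime divisors down to two, modularity plus Theorem 1.1 to reduce dismantlability to a forbidden-sublattice condition) matches the paper's. The one genuine difference lies in how the contradiction is reached when $|G|=p^nq^m$ and a Sylow subgroup is non-cyclic: the paper exhibits an explicit \emph{crown of order} $6$, taking two maximal subgroups $M,M'$ of the non-cyclic factor $G_1$ and forming $H_1=M$, $K_1=G_1$, $H_2=M'$, $K_2=M'\times G_2$, $H_3=(M\cap M')\times G_2$, $K_3=M\times G_2$; you instead exhibit a sublattice isomorphic to ${\bf 2}^3$ generated by three independent atoms $A,B,C$ with $|A|=|B|=p$, $|C|=q$. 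By Theorem 1.1 these two obstructions are interchangeable in a modular lattice, so both routes are legitimate. Your choice is arguably the more transparent one here: the verification that the eight subgroups $1,A,B,C,AB,AC,BC,ABC$ form a boolean sublattice is immediate from the decomposition $H=(H\cap P)\times(H\cap Q)$ of subgroups of $P\times Q$ for coprime $|P|,|Q|$, whereas checking the crown's comparability relations takes slightly more care; on the other hand, the paper's crown construction is the template reused later in Theorems 2.6 and 2.8, so it earns its keep there. One cosmetic slip: your list $1,p,p,q,p^2,pq,pq,p^2q$ is not a list of \emph{distinct} orders, though the eight subgroups themselves are pairwise distinct (e.g. $AC\neq BC$ since $AC\cap P=A\neq B=BC\cap P$), which is all you need.
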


\begin{proof}
The subgroup lattice of a cyclic group of order $p^n$ or $p^n q^m$
is planar by Theorem 1.3 and therefore such a group is contained
in $\cald$. On the other hand, we already have seen that rank 2
abelian $p$-groups belong to $\cald$.
\smallskip

Conversely, assume that $G$ is contained in $\cald$. Then Lemma
2.1 and the above remark show that either $G$ is an abelian
$p$-group of rank $\leq 2$ or a direct product of two $p$-groups,
say $G\cong G_1\times G_2$ with $|G_1|=p^n$ and $|G_2|=q^m$,
$n,m\in \mathbb{N}^*$. In the second case if both $G_1$ and $G_2$
are cyclic, so is $G$ and we are done. Assume now that $G_1$ is
not cyclic and take two maximal subgroups $M$ and $M'$ of $G_1$.
Then it is easy to see that the subgroups $H_1=M$, $K_1=G_1$,
$H_2=M'$, $K_2=M'\times G_2$, $H_3=(M\cap M')\times G_2$ and
$K_3=M\times G_2$ form a crown of order 6 of $L(G)$, a
contradiction. This completes the proof.
\end{proof}

An immediate consequence of Theorems 1.3 and 2.3 is the following.

\begin{corollary}
    For every integer $n=p^m$, where $p$ is a prime and $m\geq 4$,
    there is a not-planar dismantlable lattice of $n$ elements which
    is isomorphic to the subgroup lattice of a certain finite abelian
    $p$-group.
\end{corollary}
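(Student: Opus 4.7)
\emph{Plan.} Given $n=p^{m}$ with $p$ prime and $m\geq 4$, I would take $G=\mathbb{Z}_{p^{m-2}}\times\mathbb{Z}_{p^{2}}$, an abelian $p$-group of order $n$, and show that $L(G)$ provides the required example. Dismantlability of $L(G)$ is immediate from Theorem 2.3, since $G$ is a rank\,$2$ abelian $p$-group (its invariant factor decomposition is $\mathbb{Z}_{p^{2}}\times\mathbb{Z}_{p^{m-2}}$, valid because $m-2\geq 2$) and hence already lies in $\cald$.

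To prove that $L(G)$ is not planar I would argue contrapositively via Theorem 1.3. Since $G$ is abelian, non-cyclic, and a $p$-group of order at least $p^{4}$, the cases (a), (c), (d), (e) of that classification can be dismissed at once; it therefore suffices to rule out case (b), i.e.\ to show that $L(G)$ is not lattice-isomorphic to $L(\mathbb{Z}_{p^{k}}\times\mathbb{Z}_{p})$ for any $k\in\N$.

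This last reduction is the main obstacle, and my plan is to dispose of it by a purely lattice-theoretic invariant. In the subgroup lattice of any abelian $p$-group all maximal chains have the same length and the rank of a subgroup $H$ equals $\log_{p}|H|$, so every lattice isomorphism must preserve, for each $j$, the number of subgroups of order $p^{j}$. I would then count the subgroups of order $p^{2}$ on both sides. In $G$ every such subgroup is contained in the $p^{2}$-torsion part $G[p^{2}]\cong\mathbb{Z}_{p^{2}}\times\mathbb{Z}_{p^{2}}$, and an elementary enumeration there gives exactly $p^{2}+p+1$ of them (the unique Klein-type subgroup $\mathbb{Z}_{p}\times\mathbb{Z}_{p}$ together with $p(p+1)$ cyclic $\mathbb{Z}_{p^{2}}$'s). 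On the other hand, $\mathbb{Z}_{p^{k}}\times\mathbb{Z}_{p}$ has only $p+1$ subgroups of order $p^{2}$ when $k\geq 2$, and a single one when $k=1$. The strict inequality $p^{2}+p+1>p+1$ rules out case (b), and so $L(G)$ is not planar. Combining the two steps, $L(G)$ is a not-planar dismantlable lattice realized as the subgroup lattice of a finite abelian $p$-group of order $n$, as required.
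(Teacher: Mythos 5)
Your argument is correct and is essentially the route the paper intends: Corollary 2.4 is stated there with no proof beyond the remark that it is "an immediate consequence of Theorems 1.3 and 2.3," and your choice $G=\mathbb{Z}_{p^{m-2}}\times\mathbb{Z}_{p^2}$, together with the rank-$2$ criterion of Theorem 2.3 for dismantlability and the exclusion of case (b) of Theorem 1.3 for non-planarity, is exactly how that deduction is meant to go. The one step you supply that the paper leaves tacit --- ruling out a lattice isomorphism with $L(\mathbb{Z}_{p^k}\times\mathbb{Z}_p)$ by counting height-$2$ elements ($p^2+p+1$ in $G$ versus at most $p+1$) --- is sound, since both lattices are graded with height equal to $\log_p$ of the subgroup order; note only that the phrase "lattice of $n$ elements" in the statement has to be read, as you implicitly do, as referring to the order of the group rather than to the cardinality of $L(G)$, which is not $p^m$.
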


We also observe that almost the same thing can be said about
nilpotent groups contained in $\cald$, namely:

\begin{theorem}
    Let $G$ be a finite nilpotent group contained in $\cald$. Then
    either $G$ is cyclic and $|G|\in\{p^n,p^n q^m\}$,
    where $p,q$ are primes and $n,m\in \mathbb{N}$,
    or $G$ is a non-cyclic $p$-group which does not contain
    abelian sections of type $(p,p,p)$.
\end{theorem}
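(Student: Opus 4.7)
The plan is to combine Lemma~2.1 with the Sylow decomposition of a nilpotent group to reduce to at most two primes, and then split into a $p$-group case and a two-prime case.

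Since $G$ is finite nilpotent, it is the internal direct product of its Sylow subgroups. If three distinct primes $p,q,r$ divided $|G|$, then $p$-, $q$-, and $r$-elements chosen from three different Sylow factors would pairwise commute, and their product would have order $pqr$, contradicting Lemma~2.1. Hence $|G|\in\{p^n,p^n q^m\}$.

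\emph{Case 1:} $|G|=p^n$. If $G$ is cyclic we land in the first alternative. Otherwise, suppose for contradiction that $G$ admits an abelian section of type $(p,p,p)$, i.e.\ there exist $N\unlhd H\leq G$ with $H/N\cong(\Z/p\Z)^3$. The interval $[N,H]$ of $L(G)$ is a sublattice isomorphic to $L(H/N)$, which is the lattice of subspaces of a $3$-dimensional vector space over $\Z/p\Z$. Choosing three independent lines together with all their joins realises ${\bf 2}^3$ as a sublattice of $L(G)$. Since $\cald$ is closed under sublattices and ${\bf 2}^3$ is not dismantlable (it is modular and contains itself as a sublattice in the sense of Theorem~1.1), this contradicts $G\in\cald$.

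\emph{Case 2:} $|G|=p^n q^m$ with $n,m\geq 1$. Nilpotence forces $G=P\times Q$ for the Sylow $p$- and $q$-subgroups. If both $P$ and $Q$ are cyclic, then so is $G$ and we are done. Otherwise we may assume $P$ is non-cyclic, and pick two distinct maximal subgroups $M\ne M'$ of $P$. Exactly as in the proof of Theorem~2.3, the subgroups
\[
H_1=M,\ K_1=P,\ H_2=M',\ K_2=M'\times Q,\ H_3=(M\cap M')\times Q,\ K_3=M\times Q
\]
form a crown of order $6$ in $L(G)$, contradicting dismantlability via Theorem~1.1.

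The main technical point is Case~1, where one must recognise that an abelian $(p,p,p)$-section transfers to a sublattice of $L(G)$ through the interval $[N,H]$, and that this sublattice in turn contains ${\bf 2}^3$. The verification of the crown in Case~2 is essentially a transcription of the argument from Theorem~2.3, since the required direct product decomposition is now supplied by nilpotence rather than by $G$ being abelian.
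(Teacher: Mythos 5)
Your proposal is correct and follows essentially the route the paper intends: the paper states this theorem as an observation whose proof is left implicit, resting on Lemma~2.1 (to limit the prime divisors), the remark before Theorem~2.3 identifying abelian $(p,p,p)$-sections with ${\bf 2}^3$-sublattices via Theorem~1.1, and the crown of order $6$ from the proof of Theorem~2.3 for the two-prime non-cyclic case. Your write-up fills in exactly these steps (the interval $[N,H]\cong L(H/N)$ argument and the transplanted crown), so it matches the paper's approach.
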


Unfortunately, we were not able to determine all non-cyclic
$p$-groups with the above property. It is obvious that this is
satisfied by rank 2 abelian $p$-groups, but several non-abelian
$p$-groups also satisfy it, as shows the following theorem.

\begin{theorem}
    All finite $p$-groups having a cyclic maximal subgroup are
    contained in $\cald$. In particular, the non-abelian $p$-groups
    $M(p^n)$, $D_{2^n}$, $Q_{2^n}$ for $n\geq 3$, and $QD_{2^n}$
    for $n\geq 4$ are contained in $\cald$.
\end{theorem}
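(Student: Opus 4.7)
The proof strategy relies on the classical classification of finite $p$-groups with a cyclic maximal subgroup (see, e.g., Chapter~5 of \cite{9} or Chapter~2 of \cite{7}): up to isomorphism, such a $G$ is one of $\mathbb{Z}_{p^n}$, $\mathbb{Z}_{p^{n-1}}\times\mathbb{Z}_p$, $M(p^n)$, or, for $p=2$, $D_{2^n}$, $Q_{2^n}$, or $QD_{2^n}$. My plan is to verify that $L(G)$ is dismantlable in each of these cases, appealing to Theorem 1.3 whenever possible and resorting to a direct argument only when forced.

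For $\mathbb{Z}_{p^n}$ and $\mathbb{Z}_{p^{n-1}}\times\mathbb{Z}_p$, parts (a) and (b) of Theorem 1.3 give planarity of $L(G)$, hence dismantlability. It is a classical fact that $L(M(p^n))$ is isomorphic to $L(\mathbb{Z}_{p^{n-1}}\times\mathbb{Z}_p)$ (cf.\ Chapter~2 of \cite{7}), so Theorem 1.3(b) handles $M(p^n)$ as well. Similarly $D_8$, $Q_8$, and $Q_{16}$ are covered by Theorem 1.3(c). This leaves the cases $D_{2^n}$ and $QD_{2^n}$ with $n\geq 4$, and $Q_{2^n}$ with $n\geq 5$.

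For these remaining groups I plan to dismantle $L(G)$ by induction on $n$, successively removing doubly irreducible elements. Let $M=\langle a\rangle$ denote the cyclic maximal subgroup and $z$ the unique central involution of $G$. The key structural observation is that, for every cyclic subgroup $\langle g\rangle$ generated by an element $g\in G\setminus M$, the principal filter $\{K\in L(G):K\supseteq\langle g\rangle\}$ is a chain: a direct computation shows $C_G(g)=\langle g,z\rangle$, and for each order $2^k$ in the range $|\langle g\rangle|<2^k\leq 2^n$ there is exactly one subgroup of $G$ of that order containing $g$, namely the unique dihedral, quaternion, or semidihedral extension of $\langle g,z\rangle$ of that order (readable off from the defining relations of $G$). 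Consequently each such $\langle g\rangle$ is doubly irreducible in $L(G)$. Peeling them off in the natural order --- first the atoms obtained from involutions outside $M$ (in the dihedral/semidihedral cases) or the cyclic subgroups of order $4$ outside $M$ (in the quaternion case), then the Klein four or $Q_8$ subgroups, then the dihedral/quaternion/semidihedral subgroups of each increasing order --- reduces $L(G)$ to the chain $L(M)\cup\{G\}$, which is visibly dismantlable; the proper dihedral/quaternion/semidihedral subgroups that appear as intermediate strata are handled either by Theorem 1.3(c) (the base cases $D_8$, $Q_8$, $Q_{16}$) or by the inductive hypothesis.

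The main obstacle is the bookkeeping: one must check that, at each stage, the next class of elements to be removed is still doubly irreducible in the already-reduced sublattice, so that the dismantling does not stall. This reduces to the well-known tabulation of the subgroup lattices of $D_{2^n}$, $Q_{2^n}$ and $QD_{2^n}$ (as in \cite{7}). The ``in particular'' clause of the theorem then follows immediately, since $M(p^n)$, $D_{2^n}$, $Q_{2^n}$ and $QD_{2^n}$ all possess a cyclic maximal subgroup.
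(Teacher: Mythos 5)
Your route is genuinely different from the paper's. You reduce to the classification of $p$-groups with a cyclic maximal subgroup, dispose of $\mathbb{Z}_{p^n}$, $\mathbb{Z}_{p^{n-1}}\times\mathbb{Z}_p$, $M(p^n)$, $D_8$, $Q_8$, $Q_{16}$ via planarity (Theorem 1.3), and then try to dismantle $L(D_{2^n})$, $L(Q_{2^n})$, $L(QD_{2^n})$ literally, by peeling off doubly irreducible elements stratum by stratum. The paper instead gives a single uniform induction on $|G|$ that needs no case analysis at all: writing $\calg_n$ for the class in question, every maximal subgroup of a group in $\calg_n$ lies in $\calg_{n-1}$, and $|G:\Phi(G)|=p^2$ with $\Phi(G)$ cyclic. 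If a minimal counterexample $G$ contained a crown $\{H_1,K_1,\dots,H_m,K_m\}$ (Theorem 1.1), the $K_i$ could not all lie in one maximal subgroup (else the crown would sit in a member of $\calg_{n-1}\subseteq\cald$), so walking around the crown one finds two distinct $H_i$'s inside an intersection $M_1\cap M_2=\Phi(G)$ of distinct maximal subgroups; but two crown elements $H_{i_1}\neq H_{i_2}$ are incomparable, which is impossible in the cyclic group $\Phi(G)$. That argument buys brevity and uniformity (it never mentions $D_{2^n}$, $Q_{2^n}$ or $QD_{2^n}$ individually); your argument, if completed, buys an explicit dismantling order of the lattice, which is more information than the theorem asks for.

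The caveat is that, as written, your text is a plan rather than a proof: the step you defer as ``bookkeeping'' --- checking that after each stratum is deleted the next stratum is still doubly irreducible in the \emph{reduced} sublattice, simultaneously for three infinite families, with the dihedral and quaternion strata interleaving inside $QD_{2^n}$ --- is precisely the entire content of the argument for the hard cases, and nothing in the proposal discharges it. Your supporting claims (the filter above $\langle g\rangle$ for $g\notin M$ is a chain, hence $\langle g\rangle$ is doubly irreducible) are correct and do get the peeling started, but you should either carry the verification through or, better, notice that the crown-free criterion of Theorem 1.1 lets you avoid it entirely, as the paper does. Also, the appeal to ``the inductive hypothesis'' for intermediate strata is misplaced in your scheme: in a direct peeling of $L(G)$ you remove the proper dihedral/quaternion/semidihedral subgroups as single elements of $L(G)$, so the dismantlability of their own subgroup lattices is not what is needed there.
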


\begin{proof}
Let $n\geq 3$ be an integer and denore by $\calg_n$ the class of $p$-groups of order
$p^n$ possessing a cyclic maximal subgroup. This contain the cyclic $p$-group $\mathbb{Z}_{p^n}$,
the abelian $p$-group $\mathbb{Z}_p\times\mathbb{Z}_{p^{n-1}}$, as well as the non-abelian
$p$-groups described exhaustively in Theorem 4.1 of \cite{10}, II:
\begin{itemize}
\item[--] the modular $p$-group
$$M(p^n)=\langle x,y\mid x^{p^{n-1}}=y^p=1,\, y^{-1}xy=x^{p^{n-2}+1}\rangle,$$where $n\geq 4$ for $p=2$,
\item[--] the dihedral group
$$D_{2^n}=\langle x,y\mid x^{2^{n-1}}=y^2=1,\, yxy=x^{-1}\rangle,$$
\item[--] the generalized quaternion group
$$Q_{2^n}=\langle x,y\mid x^{2^{n-1}}=y^4=1,\, yxy^{-1}=x^{2^{n-1}-1}\rangle,$$
\item[--] the quasi-dihedral group
$$QD_{2^n}=\langle x,y\mid x^{2^{n-1}}=y^2=1,\, yxy=x^{2^{n-2}-1}\rangle,$$where $n\geq 4$.
\end{itemize}Notice that for every non-cyclic $p$-group
$G\in\calg_n$ we have $|G:\Phi(G)|=p^2$. Moreover, all maximal
subgroups of a group in $\calg_n$ belong to $\calg_{n-1}$.
\smallskip

We will prove by induction on $n$ that $\calg_n$ is included in
$\cald$. Clearly, this holds for $n=3$. Assume now that $n$ is a
minimal positive integer such that $\calg_n\nsubseteq\cald$ and
take $G\in\calg_n\setminus\cald$. Let
$\{H_1,K_1,H_2,K_2,...,H_m,K_m\}$ be a crown of order $2m$ in
$L(G)$. By our assumption, we infer that there is no maximal
subgroup $M$ of $G$ such that $K_i\subseteq M$ for all
$i=1,2,...,m$. Thus there exist at least two maximal subgroups
$M_1$ and $M_2$ of $G$ containing subgroups in the set
$\calk=\{K_1,K_2,...,K_m\}$. Since every $H_i$ is included in the
intersection of two subgroups in $\calk$, it follows that we can
choose $H_{i_1}\neq H_{i_2}$ which are contained in $M_1\cap
M_2=\Phi(G)$. In other words, the cyclic $p$-group $\Phi(G)$
contains two incomparable subgroups, a contradiction.
\end{proof}

An important class of finite nilpotent groups is constituted by
hamiltonian groups, that is non-abelian groups all of whose
subgroups are normal. The structure of such a group $H$ is
well-known, namely
$$H\cong Q_8 \times \mathbb{Z}_2^n \times A\,,$$where $Q_8$ is
the quaternion group, $n\in\mathbb{N}$ and $A$ is a finite abelian
group of odd order. By Theorem 2.5 we easily infer that if $H$
belongs to $\cald$, then $n=0$ and $A$ is trivial. In this way, a
nice characterization of $Q_8$ is obtained.

\begin{corollary}
    The quaternion $Q_8$ is the unique hamiltonian group contained in $\cald$.
\end{corollary}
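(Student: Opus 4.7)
The plan is to combine the structure theorem for hamiltonian groups with the modular form of Theorem~1.1. Any hamiltonian group $H$ is Dedekind, so $L(H)$ is modular; consequently, by Theorem~1.1, $H$ lies in $\cald$ if and only if $L(H)$ has no sublattice isomorphic to the boolean lattice $\mathbf{2}^3$. I shall also use the fact (first bullet of the introduction) that the class of dismantlable lattices is closed under sublattices, which means $\cald$ is closed under taking subgroups. Writing $H\cong Q_8\times\mathbb{Z}_2^n\times A$, my aim is to show that if $n\ge 1$ or $A\ne 1$ then $L(H)$ already contains a copy of $\mathbf{2}^3$.

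The reduction is uniform. If $n\ge 1$ then $Q_8\times\mathbb{Z}_2\le H$, while if $A\ne 1$ then by Cauchy's theorem there is an odd prime $p$ with $Q_8\times\mathbb{Z}_p\le H$; in either case we obtain a subgroup $K=Q_8\times\mathbb{Z}_r\le H$ for some prime $r$. I then propose the eight subgroups
\[
\langle -1\rangle,\ \langle i\rangle,\ \langle j\rangle,\ \langle -1\rangle\times\mathbb{Z}_r,\ Q_8,\ \langle i\rangle\times\mathbb{Z}_r,\ \langle j\rangle\times\mathbb{Z}_r,\ K
\]
as the $\mathbf{2}^3$-candidate, with bottom $\langle -1\rangle$, top $K$, and three ``atoms'' $\langle i\rangle$, $\langle j\rangle$, $\langle -1\rangle\times\mathbb{Z}_r$ sitting above $\langle -1\rangle$. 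A direct check, using $\langle i\rangle\cap\langle j\rangle=\langle -1\rangle$ in $Q_8$ together with the obvious coordinate-wise intersections, shows that any two of the three atoms meet exactly in $\langle -1\rangle$; their three pairwise joins are, respectively, $Q_8$, $\langle i\rangle\times\mathbb{Z}_r$ and $\langle j\rangle\times\mathbb{Z}_r$, while the triple join is $K$. Eight distinct subgroups closed under $\cap$ and $\vee$ with precisely the incidences of the cube: this is a sublattice isomorphic to $\mathbf{2}^3$, contradicting $H\in\cald$ by Theorem~1.1. Hence $n=0$ and $A=1$, so $H\cong Q_8$.

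For the converse, $Q_8$ is hamiltonian and $|L(Q_8)|=6\le 7$, so $L(Q_8)$ is dismantlable by the remark in the introduction (alternatively, $Q_8=Q_{2^3}\in\cald$ is furnished directly by Theorem~2.6). The only genuine obstacle is guessing the right $\mathbf{2}^3$: taken alone, the three order-$4$ subgroups $\langle i\rangle,\langle j\rangle,\langle k\rangle$ of $Q_8$ form only an $M_3$-configuration (all three pairwise joins equal $Q_8$), so one must introduce $\langle -1\rangle\times\mathbb{Z}_r$ as a genuinely different third atom coming from the extra prime factor. Once this substitution is made the verification is entirely mechanical.
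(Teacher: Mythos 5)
Your proof is correct, but it takes a more hands-on route than the paper. The paper disposes of the corollary in one line by invoking Theorem~2.5: a hamiltonian group is nilpotent and non-cyclic, so by that theorem it must be a $p$-group with no abelian section of type $(p,p,p)$; this forces $A=1$ (else $H$ is not a $p$-group) and $n=0$ (else $H/\langle -1\rangle\cong\mathbb{Z}_2^{\,2+n}$ yields a $(2,2,2)$-section). You instead bypass Theorem~2.5 entirely and exhibit an explicit $\mathbf{2}^3$-sublattice inside $L(Q_8\times\mathbb{Z}_r)$, using modularity of $L(H)$ (valid, since every subgroup of a Dedekind group is normal and the normal subgroup lattice is modular) to conclude via the second clause of Theorem~1.1. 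Your eight subgroups do check out: the three atoms $\langle i\rangle$, $\langle j\rangle$, $\langle -1\rangle\times\mathbb{Z}_r$ pairwise meet in $\langle -1\rangle$, the joins are as you list, and the set is closed under $\cap$ and $\vee$; your remark that the diamond $M_3$ inside $Q_8$ alone does not suffice and that the extra direct factor supplies the genuinely independent third atom is exactly the right point. What the paper's route buys is brevity, since the nilpotent classification has already been established; what yours buys is self-containedness and a concrete witness of non-dismantlability that treats the cases $n\ge 1$ and $A\ne 1$ uniformly. The converse direction ($|L(Q_8)|=6\le 7$, or Theorem~2.6) matches the paper's implicit justification.
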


In d) and e) of Theorem 1.3 two types of semidirect products of
orders $p q^{m}$ and $p^2q$, respectively, have been presented.
These belong to $\cald$ because their subgroup lattices are
planar. We end our note by studying the containment to $\cald$ for
other remarkable semidirect products: dihedral groups (notice that
these are not of type d) or e)). Recall that the dihedral group
$D_{2n}$ is the symmetry group of a regular polygon with $n$ sides
and it has the order $2n$. The most convenient abstract
description of $D_{2n}$ is obtained by using its generators: a
rotation $x$ of order $n$ and a reflection $y$ of order $2$. Under
these notations, we have
$$D_{2n}=\langle x,y\mid x^n=y^2=1,\ yxy=x^{-1}\rangle.$$The subgroup structure of $D_{2n}$ is
precisely known: for every divisor $d$ or $n$, $D_{2n}$ possesses
a subgroup isomorphic to $\mathbb{Z}_d$, namely $\langle
x^{\frac{n}{d}}\rangle$, and $\frac{n}{d}$ subgroups isomorphic to
$D_{2d}$, namely $\langle x^{\frac{n}{d}},x^{i-1}y\rangle$,
$i=1,2,...,\frac{n}{d}$\,.

\begin{theorem}
    The dihedral group $D_{2n}$ is contained in $\cald$ if and
    only if $n$ is of type $p^m$, where $p$ is a prime and
    $m\in\mathbb{N}$.
\end{theorem}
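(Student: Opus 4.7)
The plan is to handle the two directions separately, using Theorem 1.1 to reduce dismantlability to the absence of crowns.

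For the forward direction, suppose $n=p^m$. When $p=2$, the group $D_{2^{m+1}}$ is a $2$-group with $\langle x\rangle$ as a cyclic maximal subgroup, hence $D_{2^{m+1}}\in\cald$ by Theorem 2.6. When $p$ is odd I will verify directly that $L(D_{2p^m})$ contains no crown. The subgroups of $D_{2p^m}$ split into three families: the chain of rotation subgroups $\{1\}<\langle x^{p^{m-1}}\rangle<\cdots<\langle x\rangle$, the $p^m$ reflections $\langle x^j y\rangle$, and the dihedral subgroups $D_{2p^k}=\langle x^{p^{m-k}},x^{a}y\rangle$. The structural key is that the intersection of any two incomparable subgroups of $D_{2p^m}$ is cyclic. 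The only nontrivial case concerns two incomparable dihedrals $K=\langle x^{p^{m-k}},x^ay\rangle$ and $K'=\langle x^{p^{m-k'}},x^by\rangle$ with $k\le k'$: any common reflection would force $a\equiv b\pmod{p^{m-k'}}$, which together with $\langle x^{p^{m-k}}\rangle\le\langle x^{p^{m-k'}}\rangle$ yields $K\le K'$, contradicting incomparability. Granted this, in a crown $\{H_1,K_1,\ldots,H_r,K_r\}$ each $H_i\le K_{i-1}\cap K_i$ is cyclic. A reflection $H_i=\langle x^jy\rangle$ is, however, contained only in the chain $\langle x^jy\rangle<\langle x^{p^{m-1}},x^jy\rangle<\cdots<D_{2p^m}$, which would render $K_{i-1}$ and $K_i$ comparable. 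Hence every $H_i$ is a rotation subgroup; but rotation subgroups form a single chain, contradicting the pairwise incomparability of the $H_i$'s.

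For the converse, suppose $n$ is not a prime power, so $pq\mid n$ for distinct primes $p,q$. Then $D_{2pq}=\langle x^{n/pq},y\rangle$ is a subgroup of $D_{2n}$, and since dismantlable lattices are closed under sublattices it suffices to display a crown in $L(D_{2pq})$. Writing $\tilde x$ for the generator of order $pq$, the six subgroups
$$H_1=\langle y\rangle,\ K_1=\langle\tilde x^p,y\rangle,\ H_2=\langle\tilde x^p\rangle,\ K_2=\langle\tilde x\rangle,\ H_3=\langle\tilde x^q\rangle,\ K_3=\langle\tilde x^q,y\rangle$$
form a crown of order $6$. The six required containments are immediate, while $y\notin\langle\tilde x\rangle$ rules out $H_1\le K_2$, and the fact that the rotations of $K_3\cong D_{2p}$ have order $p$ prevents $\tilde x^p$ (of order $q$) from lying in $K_3$, symmetrically for $\tilde x^q$ in $K_1$; pairwise incomparability of the $K_i$'s follows from the presence or absence of reflections together with an order count.

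The main obstacle is the forward direction when $p$ is odd and $m\ge 2$: Theorem 2.6 does not apply since $D_{2p^m}$ is not a $p$-group, and Theorem 1.3(d) covers only $m=1$, so one cannot appeal to planarity or a previous theorem. One must rule out crowns structurally, and the delicate step is the cyclic-intersection claim that confines each $H_i$ to the union of the rotation chain and the reflections, after which the chain obstruction for rotations and the chain-above-a-reflection obstruction close the argument.
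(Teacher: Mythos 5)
Your proof is correct, but it takes a genuinely different route from the paper's in both directions. For sufficiency with $p$ odd, the paper argues by induction on $m$, mimicking the proof of Theorem 2.6: the $K_i$'s of a putative crown cannot all lie in one maximal subgroup (every maximal subgroup is $\langle x\rangle$ or a copy of $D_{2p^{m-1}}$, hence in $\cald$ by induction), so two distinct $H_i$'s land in $\Phi(D_{2p^m})=\langle x^p\rangle$, which is cyclic --- a contradiction. You instead give a direct, induction-free structural argument: incomparable subgroups of $D_{2p^m}$ intersect in a cyclic subgroup (your congruence analysis of shared reflections is the right justification), the overgroups of a reflection form a chain, and the rotation subgroups form a chain; this is a little longer but entirely self-contained. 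For necessity, the paper uses Lemma 2.1 to write $n=2^kp^m$ and then exhibits a crown of order $6$ in $L(D_{2n})$ when $k,m\ge 1$, whereas you pass to the sublattice $L(D_{2pq})$ for two distinct primes dividing $n$ and exhibit the crown $\{\langle y\rangle, \langle\tilde x^p,y\rangle, \langle\tilde x^p\rangle, \langle\tilde x\rangle, \langle\tilde x^q\rangle, \langle\tilde x^q,y\rangle\}$ there. Your version buys something real: it works for arbitrary distinct primes $p,q$, while Lemma 2.1 by itself only forces $n$ to be a product of at most two prime powers (the rotation $x$ has order $n$), not that one of those primes equals $2$, so the paper's parametrization $n=2^kp^m$ leaves the case $n=p^aq^b$ with $p,q$ both odd implicit; your argument covers it explicitly.
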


\begin{proof}
Assume first that $D_{2n}\in\cald$. By Lemma 2.1 we infer that
$n=2^kp^m$ with $p$ an odd prime and $k,m\in\mathbb{N}$. If $m=0$,
then $n$ is of the desired type. For $m\geq 1$ we will prove that
$k=0$. Suppose $k\geq 1$. Then it is easy to see that the
subgroups $H_1=\langle x^{2^{k-1}p^m}\rangle$, $K_1=\langle
x^{2^{k-1}p^m},y\rangle$, $H_2=\langle y\rangle$, $K_2=\langle
x^{2^k},y\rangle$, $H_3=\langle x^{2^k}\rangle$ and $K_3=\langle
x\rangle$ form a crown of order 6 in $L(D_{2n})$, a contradiction.
\smallskip

Conversely, we already know that $D_{2^{m+1}}$ belongs to $\cald$
by Theorem 2.6. Therefore we can assume that $p$ is odd. We will
prove by induction on $m$ that $D_{2p^m}$ is also contained in
$\cald$. This is obviously true for $m=1$ since $D_{2p}$ and
$\mathbb{Z}_p\times\mathbb{Z}_p$ are lattice-isomorphic. Let $m$
be minimal with the property $D_{2p^m}\notin\cald$ and take a
crown of order $2r$ in $L(D_{2p^m})$, say
$\{H_1,K_1,H_2,K_2,...,H_r,K_r\}$. We observe that the maximal
subgroups of $D_{2p^m}$ are $\langle
x\rangle\cong\mathbb{Z}_{p^m}$ and $\langle
x^p,x^{i-1}y\rangle\cong D_{2p^{m-1}}$, $i=1,2,...,p$. Since by
inductive hypothesis all $K_i$'\,s cannot be contained in the same
maximal subgroup of $D_{2p^m}$ and by using a similar argument as
in the proof of Theorem 2.6, one obtains that there are $H_i\neq
H_j$ included in $\Phi(D_{2p^m})$. But $\Phi(D_{2p^m})=\langle
x^p\rangle\cong\mathbb{Z}_{p^{m-1}}$ is cyclic, a contradiction.
Hence $D_{2p^m}\in\cald$.
\end{proof}

Finally, we indicate a natural open problem concerning the above
study.

\bigskip\noindent{\bf Open problem.} Characterize \textit{arbitrary} finite groups $G$
contained in $\cald$. Is it true that all these groups are
metacyclic?
\bigskip

\noindent{\bf Acknowledgments.} We are grateful to Professor
Roland Schmidt for his advices on the first version of our note.

\vspace*{5ex}\small

\hfill
\begin{minipage}[t]{5cm}
Marius T\u arn\u auceanu \\
Faculty of  Mathematics \\
``Al.I. Cuza'' University \\
Ia\c si, Romania \\
e-mail: {\tt tarnauc@uaic.ro}
\end{minipage}

\end{document}